\newtheorem{theorem}{Theorem}
\theoremstyle{plain}
\newtheorem{corollary}{Corollary}
\newtheorem{lemma}{Lemma}
\newtheorem{remark}{Remark}
\numberwithin{equation}{section}
\begin{document}
\title[Hierarchical fixed point problems and variational inequalities]{%
Strong convergence with a modified iterative projection method for
hierarchical fixed point problems and variational inequalities}
\author{\.{I}brahim Karahan$^{\ast }$}
\address{$^{\ast }$(Corresponding Author) Department of Mathematics, Faculty
of Science, Erzurum Technical University, Erzurum, 25240, Turkey}
\email{ibrahimkarahan@erzurum.edu.tr}
\author{Murat \"{O}zdemir}
\address{Department of Mathematics, Faculty of Science, Ataturk University,
Erzurum, 25240, Turkey.}
\email{mozdemir@atauni.edu.tr}
\subjclass[2000]{47H10; 47J20; 47H09; 47H05}
\keywords{Variational inequality; hierarchical fixed point; nearly
nonexpansive mappings; strong convergence}

\begin{abstract}
Let $C$ be a nonempty closed convex subset of a real Hilbert space $H$. Let $%
\left\{ T_{n}\right\} :C\rightarrow H$ be a sequence of nearly nonexpansive
mappings such that $\tciFourier :=\tbigcap_{i=1}^{\infty }F\left(
T_{i}\right) \neq \emptyset $. Let $V:C\rightarrow H$ be a $\gamma $%
-Lipschitzian mapping and $F:C\rightarrow H$ be a $L$-Lipschitzian and $\eta 
$-strongly monotone operator. This paper deals with a modified iterative
projection method for approximating a solution of the hierarchical fixed
point problem. It is shown that under certain approximate assumptions on the
operators and parameters, the modified iterative sequence $\{x_{n}\}$
converges strongly to $x^{\ast }\in \tciFourier $ which is also the unique
solution of the following variational inequality: 
\begin{equation*}
\left\langle \left( \rho V-\mu F\right) x^{\ast },x-x^{\ast }\right\rangle
\leq 0,\text{ }\forall x\in \tciFourier .
\end{equation*}%
As a special case, this projection method can be used to find the minimum
norm solution of above variational inequality; namely, the unique solution $%
x^{\ast }$ to the quadratic minimization problem: $x^{\ast }=\func{argmin}%
_{x\in \tciFourier }\left\Vert x\right\Vert ^{2}$. The results here improve
and extend some recent corresponding results of other authors.
\end{abstract}

\maketitle

\section{Introduction}

Throughout this paper, we assume that $H$ is a real Hilbert space whose
inner product and norm are denoted by $\left\langle \cdot ,\cdot
\right\rangle $ and $\left\Vert \cdot \right\Vert $, respectively, and $C$
is a nonempty closed convex subset of $H$. The set of fixed points of a
mapping $T$ is denoted by $Fix(T)$, that is, $Fix(T)=\{x\in H:Tx=x\}$. Below
we gather some basic definitions and results which are needed in the
subsequent sections. Recall that a mapping $T:C\rightarrow H$ is called $L$%
-Lipschitzian if there exits a constant $L>0$ such that $\left\Vert
Tx-Ty\right\Vert \leq L\left\Vert x-y\right\Vert $, $\forall x,y\in C$. In
particular, if $L\in \lbrack 0,1)$, then $T$ is said to be a contraction; if 
$L=1$, then $T$ is called a nonexpansive mapping. $T$ is called nearly
nonexpansive \cite{AOS1,AOS2} with respect to a fixed sequence $\{a_{n}\}$
in $[0,\infty )$ with $a_{n}\rightarrow 0$ if $\left\Vert
T^{n}x-T^{n}y\right\Vert \leq \left\Vert x-y\right\Vert +a_{n},$ $\forall
x,y\in C$ and $n\geq 1$.

A mapping $F:C\rightarrow H$ is called $\eta $-strongly monotone if there
exists a constant $\eta \geq 0$ such that%
\begin{equation*}
\left\langle Fx-Fy,x-y\right\rangle \geq \eta \left\Vert x-y\right\Vert ^{2},%
\text{ }\forall x,y\in C.
\end{equation*}%
In particular, if $\eta =0$, then $F$ is said to be monotone.

It is well known that for any $x\in H,$ there exists a unique point $%
y_{0}\in C$ such that%
\begin{equation*}
\left\Vert x-y_{0}\right\Vert =\inf \left\{ \left\Vert x-y\right\Vert :y\in
C\right\} ,
\end{equation*}%
where $C$ is a nonempty closed convex subset of $H$. We denote $y_{0}$ by $%
P_{C}x,$ where $P_{C}$ is called the metric projection of $H$ onto $C.$ It
is easy to see $P_{C}$ is a nonexpansive mapping.

Let $S:C\rightarrow H$ be a nonexpansive mapping. The following problem is
called a hierarchical fixed point problem: Find $x^{\ast }\in Fix(T)$ such
that%
\begin{equation}
\left\langle x^{\ast }-Sx^{\ast },x-x^{\ast }\right\rangle \geq 0\text{, \ }%
x\in Fix(T).  \label{Hyr}
\end{equation}%
The problem ($\ref{Hyr}$) is equivalent to the following fixed point
problem: to find an $x^{\ast }\in C$ that satisfies $x^{\ast
}=P_{Fix(T)}Sx^{\ast }$. We know that $Fix(T)$ is closed and convex, so the
metric projection $P_{Fix(T)}$ is well defined.

It is known that the hierarchical fixed point problem ($\ref{Hyr}$) links
with some monotone variational inequalities and convex programming problems;
see \cite{CMMY,T1,YCL,GVC,YC,TH}. Various methods have been proposed to
solve the hierarchical fixed point problem; see Moudafi in \cite{M}, Mainge
and Moudafi in \cite{MM}, Yao and Liou in \cite{YY}, Xu in \cite{X}, Marino
and Xu in \cite{MX2}\ and Bnouhachem and Noor in \cite{BN}.

In 2006, Marino and Xu \cite{MX1} introduced the viscosity iterative method
for nonexpansive mappings. They considered the following general iterative
method:%
\begin{equation}
x_{n+1}=\alpha _{n}\gamma f(x_{n})+\left( 1-\alpha _{n}A\right) Tx_{n},\text{
}\forall n\geq 0,  \label{f1}
\end{equation}%
where $f$ is a contraction, $T$ is a nonexpansive mapping and $A$ is a
strongly positive bounded linear operator on $H$; that is, there is a
constant $\gamma >0$ such that $\left\langle Ax,x\right\rangle \geq \gamma
\left\Vert x\right\Vert ,$ $\forall x\in H$. They proved that the sequence $%
\{x_{n}\}$ generated by (\ref{f1}) converges strongly to the unique solution
of the variational inequality 
\begin{equation}
\left\langle \left( \gamma f-A\right) x^{\ast },x-x^{\ast }\right\rangle
\leq 0,\text{ }\forall x\in C,  \label{v1}
\end{equation}%
which is the optimality condition for the minimization problem%
\begin{equation*}
\min_{x\in C}\frac{1}{2}\left\langle Ax,x\right\rangle -h(x)
\end{equation*}%
where $h$ is a potential function for $\gamma f$, i.e., $h^{\prime
}(x)=\gamma f(x)$ for all $x\in H$.

On the other hand, in 2010, Tian \cite{T1} proposed an implicit and an
explicit schemes on combining the iterative methods of Yamada \cite{Yamu}
and Marino and Xu \cite{MX1}. He also proved the strong convergence of these
two schemes to a fixed point of a nonexpansive mapping $T$ defined on a real
Hilbert space under suitable conditions. In the same year, Ceng et al. \cite%
{Ceng} investigated the following iterative method:%
\begin{equation}
x_{n+1}=P_{C}\left[ \alpha _{n}\rho Vx_{n}+\left( 1-\alpha _{n}\mu F\right)
Tx_{n}\right] ,\text{ }\forall n\geq 0,  \label{f2}
\end{equation}%
where $F$ is a $L$-Lipschitzian and $\eta $-strongly monotone operator with
constants $L,\eta >0$ and$\ V$ is a $\gamma $-Lipschitzian (possibly
non-self) mapping with constant $\gamma \geq 0$ such that $0<\mu <\frac{%
2\eta }{L^{2}}$ and $0\leq \rho \gamma <1-\sqrt{1-\mu \left( 2\eta -\mu
L^{2}\right) }$. They proved that under some approximate assumptions on the
operators and parameters, the sequence $\{x_{n}\}$ generated by (\ref{f2})
converges strongly to the unique solution of the variational inequality%
\begin{equation}
\left\langle \left( \rho V-\mu F\right) x^{\ast },x-x^{\ast }\right\rangle
\leq 0,\text{ }\forall x\in Fix(T).  \label{v2}
\end{equation}

Fix a sequence $\{a_{n}\}$ in $[0,\infty )$ with $a_{n}\rightarrow 0$ and
let $\left\{ T_{n}\right\} $ be a sequence of mappings from $C$ into $H$.
Then, the sequence $\left\{ T_{n}\right\} $ is called a sequence of nearly
nonexpansive mappings \cite{WSY,SKS} with respect to a sequence $\{a_{n}\}$
if%
\begin{equation}
\left\Vert T_{n}x-T_{n}y\right\Vert \leq \left\Vert x-y\right\Vert +a_{n}%
\text{, }\forall x,y\in C\text{, }\forall n\geq 1\text{.}  \label{nearly}
\end{equation}%
It is obvious that the sequence of nearly nonexpansive mappings is a wider
class of sequence of nonexpansive mappings. Recently, in 2012, Sahu et al. 
\cite{SKS} introduced the following iterative process for the sequence of
nearly nonexpansive mappings $\left\{ T_{n}\right\} $ defined by (\ref%
{nearly})%
\begin{equation}
x_{n+1}=P_{C}\left[ \alpha _{n}\rho Vx_{n}+\left( 1-\alpha _{n}\mu F\right)
T_{n}x_{n}\right] ,\text{ }\forall n\geq 1.  \label{f3}
\end{equation}%
They proved that the sequence $\{x_{n}\}$ generated by (\ref{f3}) converges
strongly to the unique solution of the variational inequality (\ref{v2}).

Very recently, in 2013, Wang and Xu \cite{WX} investigated an iterative
method for a hierarchical fixed point problem by%
\begin{equation}
\left\{ 
\begin{array}{c}
y_{n}=\beta _{n}Sx_{n}+\left( 1-\beta _{n}\right) x_{n},\text{ \ \ \ \ \ \ \
\ \ \ \ \ \ \ \ \ \ \ \ \ \ \ } \\ 
x_{n+1}=P_{C}\left[ \alpha _{n}\rho Vx_{n}+\left( I-\alpha _{n}\mu F\right)
Ty_{n}\right] ,\text{ }\forall n\geq 0%
\end{array}%
\right.  \label{f4}
\end{equation}%
where $S:C\rightarrow C$ is a nonexpansive mapping. They proved that under
some approximate assumptions on the operators and parameters, the sequence $%
\{x_{n}\}$ generated by (\ref{f4}) converges strongly to the unique solution
of the variational inequality (\ref{v2}).

In this paper, motivated by the work of Wang and Xu \cite{WX} and Sahu et
al. \cite{SKS} and by the recent work going in this direction, we introduce
an modified iterative projection method and prove a strong convergence
theorem based on this method for computing an element of the set of common
fixed points of a sequence $\left\{ T_{n}\right\} $ of nearly nonexpansive
mappings defined by (\ref{nearly}) which is also an unique\ solution of the
variational inequality (\ref{v2}). The presented method improves and
generalizes many known results for solving variational inequality problems
and hierarchical fixed point problems, see, e.g., \cite{T1,MX1,Ceng,SKS,WX}
and relevant references cited therein.

\section{Preliminaries}

Let $\left\{ x_{n}\right\} $ be a sequence in a Hilbert space $H$ and $x\in
H $. Throughout this paper, $x_{n}\rightarrow x$ denotes that $\left\{
x_{n}\right\} $ strongly converges to $x$ and $x_{n}\rightharpoonup x$
denotes that $\left\{ x_{n}\right\} $ weakly converges to $x$.

Let $C$ be a nonempty subset of a real Hilbert space $H$ and $%
T_{1},T_{2}:C\rightarrow H$ be two mappings. We denote $\mathcal{B}\left(
C\right) $, the collection of all bounded subsets of $C$. The deviation
between $T_{1}$ and $T_{2}$ on $B\in $ $\mathcal{B}\left( C\right) $,
denoted by $\mathfrak{D}_{B}\left( T_{1},T_{2}\right) ,$ is defined by%
\begin{equation*}
\mathfrak{D}_{B}\left( T_{1},T_{2}\right) =\sup \left\{ \left\Vert
T_{1}x-T_{2}x\right\Vert :x\in B\right\} .
\end{equation*}

The following lemmas will be used in the next section.

\begin{lemma}
\cite{WSY}\label{Wang} Let $C$ be a nonempty closed bounded subset of a
Banach space $X$ and $\{T_{n}\}$ be a sequence of nearly nonexpansive
self-mappings on $C$ with a sequence $\{a_{n}\}$ such that $\mathfrak{D}%
_{C}\left( T_{n},T_{n+1}\right) <\infty $. Then, for each $x\in C$, $%
\{T_{n}x\}$ converges strongly to some point of $C$. Moreover, if $T$ is a
mapping from $C$ into itself defined by $Tz=\lim_{n\rightarrow \infty
}T_{n}z $ for all $z\in C$, then $T$ is nonexpansive and $\lim_{n\rightarrow
\infty }\mathfrak{D}_{C}\left( T_{n},T\right) =0$.
\end{lemma}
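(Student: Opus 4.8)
The plan is to establish the conclusion in three stages: first produce the pointwise limit mapping $T$ via a Cauchy argument, then verify that $T$ is nonexpansive by passing to the limit in the defining inequality, and finally upgrade the pointwise convergence to convergence in the deviation $\mathfrak{D}_C$ by observing that all the relevant estimates are uniform in $x$. Throughout, I read the hypothesis as the summability condition $\sum_{k=1}^{\infty }\mathfrak{D}_C(T_k,T_{k+1})<\infty$, which is what drives every estimate (mere finiteness of each term is automatic when $C$ is bounded and hence carries no content).

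First I would fix $x\in C$ and show that $\{T_n x\}$ is Cauchy. For indices $m>n$, the triangle inequality together with the definition of the deviation gives
$$\left\Vert T_n x-T_m x\right\Vert \leq \sum_{k=n}^{m-1}\left\Vert T_k x-T_{k+1}x\right\Vert \leq \sum_{k=n}^{m-1}\mathfrak{D}_C(T_k,T_{k+1}).$$
The summability hypothesis forces the tails of this series to vanish, so $\{T_n x\}$ is Cauchy; since $X$ is complete and $C$ is closed, the limit exists and lies in $C$. This legitimizes the definition $Tx:=\lim_{n\rightarrow \infty }T_n x\in C$.

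Next, for nonexpansiveness, I would fix $x,y\in C$ and apply the nearly nonexpansive inequality $\left\Vert T_n x-T_n y\right\Vert \leq \left\Vert x-y\right\Vert +a_n$. Letting $n\rightarrow \infty$, and using $a_n\rightarrow 0$ together with continuity of the norm, yields
$$\left\Vert Tx-Ty\right\Vert =\lim_{n\rightarrow \infty }\left\Vert T_n x-T_n y\right\Vert \leq \left\Vert x-y\right\Vert ,$$
so $T$ is nonexpansive.

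The crux — and the only place where uniformity matters — is the final claim. For each fixed $n$ and each $x\in C$, letting $m\rightarrow \infty$ in the Cauchy estimate above gives
$$\left\Vert T_n x-Tx\right\Vert =\lim_{m\rightarrow \infty }\left\Vert T_n x-T_m x\right\Vert \leq \sum_{k=n}^{\infty }\mathfrak{D}_C(T_k,T_{k+1}),$$
whose right-hand side is independent of $x$. Taking the supremum over $x\in C$ then gives $\mathfrak{D}_C(T_n,T)\leq \sum_{k=n}^{\infty }\mathfrak{D}_C(T_k,T_{k+1})$, which is the tail of a convergent series and hence tends to $0$ as $n\rightarrow \infty$. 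The only delicate point is recognizing that the summability hypothesis furnishes a tail bound \emph{independent of} $x$, which is exactly what turns pointwise convergence into convergence in the $\mathfrak{D}_C$ metric; the remaining interchanges of limits are routine, justified by completeness of $X$ and continuity of the norm.
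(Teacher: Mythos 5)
Your proof is correct, but there is nothing in the paper to compare it against: Lemma~\ref{Wang} is quoted from \cite{WSY} and no proof is given here. Your three-stage argument (telescoping Cauchy estimate over the deviations, passage to the limit in the near-nonexpansiveness inequality, and the tail bound $\left\Vert T_{n}x-Tx\right\Vert \leq \sum_{k=n}^{\infty }\mathfrak{D}_{C}\left( T_{k},T_{k+1}\right) $ uniform in $x$) is precisely the standard proof, and it is the one the original source uses. The point most worth highlighting is your opening remark, which is not a cosmetic rereading but an actual repair of the statement: as printed in this paper, the hypothesis $\mathfrak{D}_{C}\left( T_{n},T_{n+1}\right) <\infty $ is vacuous, since $C$ is bounded and the $T_{n}$ are self-mappings of $C$, so every deviation is at most $\mathrm{diam}\left( C\right) $; and with only that hypothesis the conclusion is false --- take $T_{n}$ to alternate between two distinct constant maps $c_{1},c_{2}\in C$, which are nonexpansive with $\mathfrak{D}_{C}\left( T_{n},T_{n+1}\right) =\left\Vert c_{1}-c_{2}\right\Vert <\infty $, yet $\left\{ T_{n}x\right\} $ does not converge. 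The summability condition $\sum_{n=1}^{\infty }\mathfrak{D}_{C}\left( T_{n},T_{n+1}\right) <\infty $ that you substituted is what appears in \cite{WSY}, and it is exactly what your Cauchy and tail estimates require; the paper's statement has simply dropped the summation sign. So your proposal is both a correct proof and a correct diagnosis of a typo in the statement being proved.
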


\begin{lemma}
\cite{Ceng}\label{str} Let $V:C\rightarrow H$ be a $\gamma $-Lipschitzian
mapping with a constant $\gamma \geq 0$ and let $F:C\rightarrow H$ be a $L$%
-Lipschitzian and $\eta $-strongly monotone operator with constants $L,\eta
>0$. Then for $0\leq \rho \gamma <\mu \eta ,$%
\begin{equation*}
\left\langle \left( \mu F-\rho V\right) x-\left( \mu F-\rho V\right)
y,x-y\right\rangle \geq \left( \mu \eta -\rho \gamma \right) \left\Vert
x-y\right\Vert ^{2},\text{ }\forall x,y\in C.
\end{equation*}%
That is, $\mu F-\rho V$ is strongly monotone with coefficient $\mu \eta
-\rho \gamma $.
\end{lemma}

\begin{lemma}
\cite{Yamu}\label{cont} Let $C$ be a nonempty subset of a real Hilbert space 
$H.$ Suppose that $\lambda \in \left( 0,1\right) $ and $\mu >0$. Let $%
F:C\rightarrow H$ be a $L$-Lipschitzian and $\eta $-strongly monotone
operator on $C$. Define the mapping $G:C\rightarrow H$ by%
\begin{equation*}
Gx=x-\lambda \mu Fx\text{, }\forall x\in C.
\end{equation*}%
Then, $G$ is a contraction that provided $\mu <\frac{2\eta }{L^{2}}$. More
precisely, for $\mu \in \left( 0,\frac{2\eta }{L^{2}}\right) ,$%
\begin{equation*}
\left\Vert Gx-Gy\right\Vert \leq \left( 1-\lambda \nu \right) \left\Vert
x-y\right\Vert \text{, }\forall x,y\in C,
\end{equation*}%
where $\nu =1-\sqrt{1-\mu \left( 2\eta -\mu L^{2}\right) }$.
\end{lemma}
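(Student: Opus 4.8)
The plan is to estimate $\left\Vert Gx-Gy\right\Vert ^{2}$ directly and then reduce everything to a scalar inequality. Since $Gx-Gy=\left( x-y\right) -\lambda \mu \left( Fx-Fy\right) $, I would first expand
\[
\left\Vert Gx-Gy\right\Vert ^{2}=\left\Vert x-y\right\Vert ^{2}-2\lambda \mu \left\langle Fx-Fy,x-y\right\rangle +\lambda ^{2}\mu ^{2}\left\Vert Fx-Fy\right\Vert ^{2}.
\]
Applying the $\eta $-strong monotonicity of $F$ to the inner-product term and the $L$-Lipschitz bound $\left\Vert Fx-Fy\right\Vert \leq L\left\Vert x-y\right\Vert $ to the last term gives
\[
\left\Vert Gx-Gy\right\Vert ^{2}\leq \left( 1-2\lambda \mu \eta +\lambda ^{2}\mu ^{2}L^{2}\right) \left\Vert x-y\right\Vert ^{2}.
\]

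The remaining, and main, task is to prove the scalar inequality $1-2\lambda \mu \eta +\lambda ^{2}\mu ^{2}L^{2}\leq \left( 1-\lambda \nu \right) ^{2}$ for all $\lambda \in \left( 0,1\right) $, since taking square roots then finishes the proof. I would treat both sides as quadratics in $\lambda $. Writing $\tau =\mu \left( 2\eta -\mu L^{2}\right) $ so that $\nu =1-\sqrt{1-\tau }$ and hence $\left( 1-\nu \right) ^{2}=1-\tau $, a direct check shows the two quadratics agree at $\lambda =0$ (both equal $1$) and at $\lambda =1$ (both equal $1-\tau $, using $\nu ^{2}-2\nu =-\tau $). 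Consequently their difference factors as a constant multiple of $\lambda \left( \lambda -1\right) $; explicitly,
\[
\left( 1-\lambda \nu \right) ^{2}-\left( 1-2\lambda \mu \eta +\lambda ^{2}\mu ^{2}L^{2}\right) =\left( \nu ^{2}-\mu ^{2}L^{2}\right) \lambda \left( \lambda -1\right) .
\]
Since $\lambda \left( \lambda -1\right) <0$ on $\left( 0,1\right) $, the desired inequality is equivalent to the single condition $\nu \leq \mu L$.

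The key obstacle is thus verifying $\nu \leq \mu L$, which I expect to be the delicate point. I would prove the slightly stronger bound $\nu \leq \mu \eta $: the inequality $1-\sqrt{1-\tau }\leq \mu \eta $ rearranges to $1-\mu \eta \leq \sqrt{1-\tau }$, which is automatic when $\mu \eta \geq 1$ and, when $\mu \eta <1$, follows after squaring from $\eta ^{2}\leq L^{2}$. That last fact, $\eta \leq L$, is the standard consequence of combining strong monotonicity with the Cauchy--Schwarz inequality and the Lipschitz bound, namely $\eta \left\Vert x-y\right\Vert ^{2}\leq \left\langle Fx-Fy,x-y\right\rangle \leq L\left\Vert x-y\right\Vert ^{2}$; then $\nu \leq \mu \eta \leq \mu L$ closes the argument. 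I would also note in passing that $\mu <2\eta /L^{2}$ guarantees $\tau >0$ and, together with $\eta \leq L$, that $\tau \leq \eta ^{2}/L^{2}\leq 1$, so that $\nu \in \left( 0,1\right] $ and $1-\lambda \nu >0$, which legitimizes the square-root step and yields the stated contraction constant $1-\lambda \nu $.
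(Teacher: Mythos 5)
Your proof is correct: the expansion of $\left\Vert Gx-Gy\right\Vert ^{2}$, the observation that the two quadratics in $\lambda $ agree at $\lambda =0$ and $\lambda =1$, the factorization
\begin{equation*}
\left( 1-\lambda \nu \right) ^{2}-\left( 1-2\lambda \mu \eta +\lambda
^{2}\mu ^{2}L^{2}\right) =\left( \nu ^{2}-\mu ^{2}L^{2}\right) \lambda
\left( \lambda -1\right)
\end{equation*}
(which is equivalent to the defining identity $\left( 1-\nu \right)
^{2}=1-\mu \left( 2\eta -\mu L^{2}\right) $), and the chain $\nu \leq \mu
\eta \leq \mu L$ via $\eta \leq L$ all check out; you also correctly verify $
1-\lambda \nu >0$ before taking square roots. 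Note, however, that the paper
itself offers no proof --- the lemma is quoted from Yamada's paper --- and
the standard argument there is shorter and bypasses your main difficulty
entirely. One writes
\begin{equation*}
Gx=\left( 1-\lambda \right) x+\lambda \left( x-\mu Fx\right) ,
\end{equation*}
so that $G$ is a convex combination of the identity and $W:=I-\mu F$. Your
own expansion applied to $W$ (i.e., with $\lambda $ replaced by $1$) gives $
\left\Vert Wx-Wy\right\Vert ^{2}\leq \left( 1-\mu \left( 2\eta -\mu
L^{2}\right) \right) \left\Vert x-y\right\Vert ^{2}=\left( 1-\nu \right)
^{2}\left\Vert x-y\right\Vert ^{2}$, and the triangle inequality then yields
$\left\Vert Gx-Gy\right\Vert \leq \left( 1-\lambda \right) \left\Vert
x-y\right\Vert +\lambda \left( 1-\nu \right) \left\Vert x-y\right\Vert
=\left( 1-\lambda \nu \right) \left\Vert x-y\right\Vert $, which is exactly
the stated constant. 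This route requires no comparison of quadratics in $
\lambda $, no factorization, and no auxiliary inequalities $\nu \leq \mu L$
or $\eta \leq L$ (beyond what is implicitly needed for $\nu $ to be real).
What your computation buys in exchange is sharper information: it exhibits
the scalar inequality on $\left( 0,1\right) $ as being \emph{equivalent} to $
\nu \leq \mu L$, and it makes explicit the usually suppressed facts $\eta
\leq L$ and $\nu \leq \mu \eta $.
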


\begin{lemma}
\label{b}\cite{Kirk} Let $C$ be a nonempty closed convex subset of a real
Hilbert space $H,$ and $T$ be a nonexpansive self-mapping on $C.$ If $%
Fix\left( T\right) \neq \emptyset ,$ then $I-T$ is demiclosed; that is
whenever $\left\{ x_{n}\right\} $ is a sequence in $C$ weakly converging to
some $x\in C$ and the sequence $\left\{ \left( I-T\right) x_{n}\right\} $
strongly converges to some $y$, it follows that $\left( I-T\right) x=y.$
Here $I$ is the identity operator of $H.$
\end{lemma}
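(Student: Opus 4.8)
The plan is to prove the statement directly from the definition of nonexpansiveness together with the elementary interplay between weak and strong convergence in a Hilbert space, rather than to invoke any heavier machinery. Writing $u:=x-y$, the assertion $(I-T)x=y$ is the same as $Tx=u$, so it suffices to show $\|u-Tx\|=0$. Before anything else I would record the one analytic fact that drives everything: if $a_n\to a$ strongly and $b_n\rightharpoonup b$ weakly in $H$, then $\langle a_n,b_n\rangle\to\langle a,b\rangle$ (split the difference as $\langle a_n-a,b_n\rangle+\langle a,b_n-b\rangle$ and use that weakly convergent sequences are bounded). Equivalently one could route the argument through Opial's property of $H$; I prefer the direct inner-product computation below. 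It is worth noting that neither $Fix(T)\neq\emptyset$ nor the self-mapping hypothesis is actually used in what follows, so those assumptions are present only for context.

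First I would extract the consequences of the hypotheses. Set $\varepsilon_n:=(I-T)x_n-y$, so that $\varepsilon_n\to 0$ strongly by assumption. Since $Tx_n=x_n-(I-T)x_n=x_n-y-\varepsilon_n$ and $u=x-y$, we obtain
\begin{equation*}
Tx_n-u=(x_n-x)-\varepsilon_n .
\end{equation*}
Because $x_n-x\rightharpoonup 0$ and $\varepsilon_n\to 0$, this shows $Tx_n-u\rightharpoonup 0$; in particular $\{Tx_n-u\}$ is bounded. These two facts, namely $Tx_n-u\rightharpoonup 0$ and the identity $x_n-x=(Tx_n-u)+\varepsilon_n$, are all I will feed into the estimate.

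The main step is to square the nonexpansive inequality $\|Tx_n-Tx\|\le\|x_n-x\|$ and expand both sides about the vector $Tx_n-u$. Using $Tx_n-Tx=(Tx_n-u)+(u-Tx)$ on the left and $x_n-x=(Tx_n-u)+\varepsilon_n$ on the right, the common term $\|Tx_n-u\|^2$ cancels, and $\|Tx_n-Tx\|^2-\|x_n-x\|^2\le 0$ reduces to
\begin{equation*}
2\langle Tx_n-u,\,u-Tx\rangle+\|u-Tx\|^2-2\langle Tx_n-u,\,\varepsilon_n\rangle-\|\varepsilon_n\|^2\le 0 .
\end{equation*}
Now I let $n\to\infty$. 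The first inner product tends to $0$ since $Tx_n-u\rightharpoonup 0$ and $u-Tx$ is fixed; the second tends to $0$ since $\{Tx_n-u\}$ is bounded and $\varepsilon_n\to0$; and $\|\varepsilon_n\|^2\to0$. Passing to the limit in the per-$n$ inequality therefore yields $\|u-Tx\|^2\le 0$, whence $Tx=u=x-y$, i.e.\ $(I-T)x=y$.

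The only place demanding care — and the nearest thing to an obstacle — is the passage to the limit in the two cross terms: it relies precisely on the weak--strong continuity of the inner product and on boundedness of the weakly convergent sequence $\{Tx_n-u\}$. Because every individual term in the displayed inequality converges, I obtain a genuine limit (not merely a $\liminf$), so no contradiction argument is needed and the conclusion $\|u-Tx\|^2\le0$ is immediate. The conceptual content is entirely in the splitting $Tx_n-Tx=(Tx_n-u)+(u-Tx)$ together with the cancellation of $\|Tx_n-u\|^2$.
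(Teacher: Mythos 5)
Your proof is correct. Note first that the paper does not prove this lemma at all: it is quoted from Goebel and Kirk's book \cite{Kirk}, so there is no internal argument to compare against, and what you have produced is a self-contained proof of a cited result. Your computation checks out: with $u=x-y$ and $\varepsilon_n=(I-T)x_n-y$, the identities $Tx_n-u=(x_n-x)-\varepsilon_n$ and $x_n-x=(Tx_n-u)+\varepsilon_n$ are exact; expanding $\|Tx_n-Tx\|^2\le\|x_n-x\|^2$ about $Tx_n-u$ does cancel the quadratic term $\|Tx_n-u\|^2$; and the three surviving variable terms vanish in the limit for precisely the reasons you give (weak convergence of $Tx_n-u$ to $0$ tested against the fixed vector $u-Tx$; Cauchy--Schwarz with boundedness of the weakly convergent sequence against $\varepsilon_n\to0$; and $\|\varepsilon_n\|^2\to0$), leaving $\|u-Tx\|^2\le0$, hence $Tx=u$. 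In substance this is the standard Hilbert-space proof of Browder's demiclosedness principle: it is the Opial-property argument with Opial's inequality itself unwound into a direct inner-product expansion, which is essentially how the result is obtained in the cited literature. Your side remarks are also accurate: neither $Fix(T)\neq\emptyset$ nor the self-mapping hypothesis is used anywhere --- all that matters is that $T$ is nonexpansive on $C$ and that $Tx$ is defined at the weak limit $x\in C$, which the statement guarantees --- so your argument in fact establishes a slightly more general statement than the lemma asserts.
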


\begin{lemma}
\label{Y}\cite{xu} Assume that $\left\{ x_{n}\right\} $ is a sequence of
nonnegative real numbers satisfying the conditions%
\begin{equation*}
x_{n+1}\leq \left( 1-\alpha _{n}\right) x_{n}+\alpha _{n}\beta _{n},\text{ }%
\forall n\geq 1
\end{equation*}%
where $\left\{ \alpha _{n}\right\} $ \ and $\left\{ \beta _{n}\right\} $ are
sequences of real numbers such that 
\begin{eqnarray*}
&\text{(i)}&\left\{ \alpha _{n}\right\} \subset \left[ 0,1\right] \text{ and 
}\tsum_{n=1}^{\infty }\alpha _{n}=\infty \text{,\ \ \ \ \ \ \ \ \ \ \ \ \ \
\ \ \ \ \ \ \ \ \ \ \ \ \ \ \ \ \ \ \ \ \ \ \ \ \ \ \ \ \ \ \ \ \ \ \ \ \ }
\\
&\text{(ii)}&\limsup_{n\rightarrow \infty }\beta _{n}\leq 0\text{.}
\end{eqnarray*}%
Then $\lim_{n\rightarrow \infty }x_{n}=0.$
\end{lemma}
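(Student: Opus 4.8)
The plan is to reduce everything to showing that $\limsup_{n\to\infty} x_n \le \varepsilon$ for each $\varepsilon > 0$, which combined with the nonnegativity $x_n \ge 0$ forces $x_n \to 0$. First I would fix an arbitrary $\varepsilon > 0$. Since $\limsup_{n\to\infty}\beta_n \le 0$, there is an index $N$ with $\beta_n \le \varepsilon$ for all $n \ge N$. Feeding this bound into hypothesis yields the cleaner recursion
\begin{equation*}
x_{n+1} \le (1-\alpha_n)x_n + \alpha_n\varepsilon, \qquad \forall n \ge N.
\end{equation*}

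Next I would linearize by setting $y_n = x_n - \varepsilon$. A short computation shows the inhomogeneous term cancels, leaving $y_{n+1} \le (1-\alpha_n)y_n$ for $n \ge N$. Iterating this from $N$ up to $n$ then gives
\begin{equation*}
x_{n+1} - \varepsilon \le \Big(\prod_{k=N}^{n}(1-\alpha_k)\Big)(x_N - \varepsilon).
\end{equation*}
The crux is to control the product $\prod_{k=N}^n (1-\alpha_k)$. Using the elementary inequality $1-t \le e^{-t}$ valid for $t \in [0,1]$, I obtain $\prod_{k=N}^n(1-\alpha_k) \le \exp\big(-\sum_{k=N}^n \alpha_k\big)$, and since $\sum_{n=1}^\infty \alpha_n = \infty$ the exponent tends to $-\infty$, so the product tends to $0$ as $n \to \infty$.

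Consequently $\limsup_{n\to\infty} x_n \le \varepsilon$. As $\varepsilon > 0$ was arbitrary and $x_n \ge 0$ for every $n$, this gives $\limsup_{n\to\infty} x_n \le 0 \le \liminf_{n\to\infty} x_n$, i.e.\ $\lim_{n\to\infty} x_n = 0$. The only genuinely delicate points are bookkeeping ones around the product estimate: one must use $\alpha_k \in [0,1]$ to guarantee that the factors $1-\alpha_k$ are nonnegative, so that iterating $y_{n+1}\le (1-\alpha_n)y_n$ does not reverse the inequality when successive factors are multiplied; and one must account for the sign of $(x_N-\varepsilon)$, which may be negative, in which case the right-hand side above is already $\le 0$ and the bound $x_{n+1}\le\varepsilon$ holds outright, so the conclusion $\limsup_{n\to\infty} x_n \le \varepsilon$ follows in either case. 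Everything else is routine.
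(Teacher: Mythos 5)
Your proof is correct, and all the delicate points (nonnegativity of the factors $1-\alpha_k$ when iterating, the sign of $x_N-\varepsilon$) are handled properly. The paper itself states this lemma without proof, citing \cite{xu}, and your argument is essentially the standard one from that source: your substitution $y_n=x_n-\varepsilon$ is equivalent to the usual induction giving $x_{n+1}\leq \left(\prod_{k=N}^{n}\left(1-\alpha_{k}\right)\right)x_{N}+\varepsilon\left(1-\prod_{k=N}^{n}\left(1-\alpha_{k}\right)\right)$, followed by the estimate $1-t\leq e^{-t}$ and letting $\varepsilon\downarrow 0$.
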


\section{Main Results}

Now, we give the main results in this paper.

\begin{theorem}
\label{X} Let $C$ be a nonempty closed convex subset of a real Hilbert space 
$H.$ Let $S:C\rightarrow H$ be a nonexpansive mapping and $\left\{
T_{n}\right\} $ be a sequence of nearly nonexpansive mappings with the
sequence $\left\{ a_{n}\right\} $ such that $\tciFourier
:=\tbigcap_{n=1}^{\infty }Fix\left( T_{n}\right) \neq \emptyset $. Suppose
that $Tx=\lim_{n\rightarrow \infty }T_{n}x$ for all $x\in C$ and $Fix\left(
T\right) =\tciFourier .$ Let $V:C\rightarrow H$ be a $\gamma $-Lipschitzian
mapping, $F:C\rightarrow H$ be a $L$-Lipschitzian and $\eta $-strongly
monotone operator such that these coefficients satisfy $0<\mu <\frac{2\eta }{%
L^{2}}$, $0\leq \rho \gamma <\nu $, where $\nu =1-\sqrt{1-\mu \left( 2\eta
-\mu L^{2}\right) }$. For an arbitrarily initial value $x_{1},$ consider the
sequence $\left\{ x_{n}\right\} $ in $C$ generated by%
\begin{equation}
\left\{ 
\begin{array}{c}
y_{n}=P_{C}\left[ \beta _{n}Sx_{n}+\left( 1-\beta _{n}\right) x_{n}\right] ,%
\text{ \ \ \ \ \ \ \ \ \ \ \ \ \ \ \ \ } \\ 
x_{n+1}=P_{C}\left[ \alpha _{n}\rho Vx_{n}+\left( I-\alpha _{n}\mu F\right)
T_{n}y_{n}\right] ,\text{ }n\geq 1,%
\end{array}%
\right.  \label{4}
\end{equation}%
where $\left\{ \alpha _{n}\right\} $ and $\left\{ \beta _{n}\right\} $ are
sequences in $\left[ 0,1\right] $ satisfying the conditions:%
\begin{eqnarray*}
&(C1)&\lim_{n\rightarrow \infty }\alpha _{n}=0\text{, }\tsum_{n=1}^{\infty
}\alpha _{n}=\infty \text{ and }\lim_{n\rightarrow \infty }\beta _{n}=0\text{%
;} \\
&(C2)&\lim_{n\rightarrow \infty }\frac{a_{n}}{\alpha _{n}}=0\text{, }%
\lim_{n\rightarrow \infty }\frac{\beta _{n}}{\alpha _{n}}=0\text{, }%
\lim_{n\rightarrow \infty }\frac{\left\vert \alpha _{n}-\alpha
_{n-1}\right\vert }{\alpha _{n}}=0\text{ and } \\
&&\lim_{n\rightarrow \infty }\frac{\left\vert \beta _{n}-\beta
_{n-1}\right\vert }{\alpha _{n}}=0\text{;} \\
&(C3)&\lim_{n\rightarrow \infty }\mathfrak{D}_{B}\left( T_{n},T_{n+1}\right)
=0\text{ and }\lim_{n\rightarrow \infty }\frac{\mathfrak{D}_{B}\left(
T_{n},T_{n+1}\right) }{\alpha _{n}}=0\text{ for each }B\in \mathcal{B}\left(
C\right) \text{. \ \ \ \ \ }
\end{eqnarray*}%
Then, the sequence $\left\{ x_{n}\right\} $ converges strongly to $x^{\ast
}\in \tciFourier $, where $x^{\ast }$\ is the unique solution of the
variational inequality%
\begin{equation}
\left\langle \left( \rho V-\mu F\right) x^{\ast },x-x^{\ast }\right\rangle
\leq 0,\text{ }\forall x\in \tciFourier .  \label{1}
\end{equation}%
In particular, the point $x^{\ast }$\ is the minimum norm fixed point of $T,$
that is $x^{\ast }$\ is the unique solution of the quadratic minimization
problem 
\begin{equation*}
x^{\ast }=\func{argmin}_{x\in \tciFourier }\left\Vert x\right\Vert ^{2}.
\end{equation*}
\end{theorem}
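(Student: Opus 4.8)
The plan is to follow the standard template for strong-convergence theorems of hybrid steepest-descent type, adapted to the nearly nonexpansive setting. First I would dispose of the variational inequality (\ref{1}) itself. By Lemma~\ref{str} the operator $\mu F-\rho V$ is strongly monotone with coefficient $\mu \eta -\rho \gamma $, and this is strictly positive because $\rho \gamma <\nu \leq \mu \eta $ (the last inequality holds since a Lipschitz strongly monotone operator always satisfies $\eta \leq L$, which forces $\nu \leq \mu \eta $). Strong monotonicity yields uniqueness of any solution of (\ref{1}) by the usual pair-and-add argument, while existence follows from Banach's fixed point theorem applied to $P_{\tciFourier }\bigl(I-t(\mu F-\rho V)\bigr)$, a contraction for all sufficiently small $t>0$ (the inner map is a contraction by a Lemma~\ref{cont}-type estimate and $P_{\tciFourier }$ is nonexpansive because $\tciFourier =Fix(T)$ is closed and convex). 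I also record that $T$ is nonexpansive, being the pointwise limit of the nearly nonexpansive $T_{n}$.

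Next I would prove boundedness and asymptotic regularity. Fixing $p\in \tciFourier $ and using that $P_{C}$ is nonexpansive, the contraction estimate $\Vert (I-\alpha _{n}\mu F)u-(I-\alpha _{n}\mu F)p\Vert \leq (1-\alpha _{n}\nu )\Vert u-p\Vert $ from Lemma~\ref{cont}, the near-nonexpansiveness $\Vert T_{n}y_{n}-p\Vert \leq \Vert y_{n}-p\Vert +a_{n}$, and the averaging bound $\Vert y_{n}-p\Vert \leq \Vert x_{n}-p\Vert +\beta _{n}\Vert Sp-p\Vert $, one reaches a recursion $\Vert x_{n+1}-p\Vert \leq (1-\alpha _{n}\tau )\Vert x_{n}-p\Vert +\alpha _{n}M_{n}$ with $\tau :=\nu -\rho \gamma >0$ and $M_{n}$ bounded (conditions $(C1)$–$(C2)$ absorb the $\beta _{n}$- and $a_{n}$-terms as $O(\alpha _{n})$). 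Induction then bounds $\{x_{n}\}$, hence also $\{y_{n}\},\{Vx_{n}\},\{T_{n}y_{n}\},\{FT_{n}y_{n}\},\{Sx_{n}\}$. Running the same differencing on $\Vert x_{n+1}-x_{n}\Vert $ produces the increments $|\alpha _{n}-\alpha _{n-1}|$, $|\beta _{n}-\beta _{n-1}|$, $a_{n}$ and the deviation $\mathfrak{D}_{B}(T_{n},T_{n-1})$; dividing by $\alpha _{n}$ and invoking $(C2)$–$(C3)$ shows these constitute an $o(\alpha _{n})$ perturbation, so Lemma~\ref{Y} gives $\Vert x_{n+1}-x_{n}\Vert \rightarrow 0$. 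Since $\Vert y_{n}-x_{n}\Vert \leq \beta _{n}\Vert Sx_{n}-x_{n}\Vert \rightarrow 0$ as well, $\{x_{n}\}$ and $\{y_{n}\}$ share their weak cluster points.

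The heart of the argument is to show $\Vert x_{n}-Tx_{n}\Vert \rightarrow 0$. I would estimate $\Vert x_{n}-Tx_{n}\Vert \leq 2\Vert x_{n}-y_{n}\Vert +\Vert y_{n}-Ty_{n}\Vert $ by nonexpansiveness of $T$, then split $\Vert y_{n}-Ty_{n}\Vert \leq \Vert y_{n}-x_{n+1}\Vert +\Vert x_{n+1}-P_{C}T_{n}y_{n}\Vert +\Vert P_{C}T_{n}y_{n}-Ty_{n}\Vert $. Writing $u_{n}:=\alpha _{n}\rho Vx_{n}+(I-\alpha _{n}\mu F)T_{n}y_{n}$ for the argument of the outer projection, the middle term is $\leq \Vert u_{n}-T_{n}y_{n}\Vert =\alpha _{n}\Vert \rho Vx_{n}-\mu FT_{n}y_{n}\Vert \rightarrow 0$, and the last term is controlled by $\mathfrak{D}_{B}(T_{n},T)$ on a bounded set $B$ carrying the orbit, which tends to $0$ by Lemma~\ref{Wang} together with $(C3)$ (using that $P_{C}$ is nonexpansive). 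With $\Vert x_{n}-Tx_{n}\Vert \rightarrow 0$ in hand, I would pick a subsequence $\{x_{n_{j}}\}$ realizing $\limsup_{n}\langle (\rho V-\mu F)x^{\ast },x_{n}-x^{\ast }\rangle $ and, by boundedness, a further weakly convergent subsequence $x_{n_{j}}\rightharpoonup z$; demiclosedness of $I-T$ (Lemma~\ref{b}) forces $z=Tz$, i.e. $z\in Fix(T)=\tciFourier $, whence (\ref{1}) gives $\langle (\rho V-\mu F)x^{\ast },z-x^{\ast }\rangle \leq 0$ and therefore $\limsup_{n}\langle (\rho V-\mu F)x^{\ast },x_{n}-x^{\ast }\rangle \leq 0$. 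I expect this step to be the main obstacle: reconciling the non-self mappings $T_{n}$ with the limit $T$ uniformly over the bounded orbit, and keeping the projection bookkeeping honest, is exactly where $(C3)$ and $\tciFourier =Fix(T)$ do their work.

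Finally I would close the loop with a single quadratic recursion. Starting from the projection inequality $\Vert x_{n+1}-x^{\ast }\Vert ^{2}\leq \langle u_{n}-x^{\ast },x_{n+1}-x^{\ast }\rangle $, I would decompose $u_{n}-x^{\ast }$ so as to isolate $\alpha _{n}\langle (\rho V-\mu F)x^{\ast },x_{n+1}-x^{\ast }\rangle $, bound the remainder via Lemma~\ref{cont} and the Cauchy--Schwarz and AM--GM inequalities, and rearrange to $\Vert x_{n+1}-x^{\ast }\Vert ^{2}\leq (1-\widetilde{\alpha }_{n})\Vert x_{n}-x^{\ast }\Vert ^{2}+\widetilde{\alpha }_{n}\widetilde{\beta }_{n}$ with $\widetilde{\alpha }_{n}\asymp \alpha _{n}$ (so $\sum \widetilde{\alpha }_{n}=\infty $) and $\limsup_{n}\widetilde{\beta }_{n}\leq 0$ by the previous step and $(C1)$–$(C2)$. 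Lemma~\ref{Y} then delivers $x_{n}\rightarrow x^{\ast }$, the unique solution of (\ref{1}). For the minimum-norm assertion I would specialize to $V=0$ and $F=I$ (so $\gamma =0$, $\eta =L=1$, and the constraints hold), under which (\ref{1}) reduces to $\langle x^{\ast },x-x^{\ast }\rangle \geq 0$ for all $x\in \tciFourier $, precisely the obtuse-angle characterization of $x^{\ast }=P_{\tciFourier }0=\operatorname{argmin}_{x\in \tciFourier }\Vert x\Vert ^{2}$.
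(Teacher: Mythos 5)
Your proposal is correct and takes essentially the same approach as the paper's own proof: the same staging into boundedness, $\left\Vert x_{n+1}-x_{n}\right\Vert \rightarrow 0$ via Lemma~\ref{Y} with (C2)--(C3), $\left\Vert x_{n}-Tx_{n}\right\Vert \rightarrow 0$, the $\limsup$ step via demiclosedness (Lemma~\ref{b}), the projection-inequality quadratic recursion closed again by Lemma~\ref{Y}, and the $V=0$, $F=I$ specialization for the minimum-norm claim. Your two deviations are harmless: the Banach-fixed-point existence argument for (\ref{1}) is a supplement the paper simply omits (it only proves uniqueness via Lemma~\ref{str}), and your Step-3 bound $\left\Vert P_{C}T_{n}y_{n}-Ty_{n}\right\Vert \leq \mathfrak{D}_{B}\left( T_{n},T\right) $ tacitly identifies $P_{C}T_{n}y_{n}$ with $T_{n}y_{n}$ (leaving an uncontrolled distance-to-$C$ term for the non-self $T_{n}$), but this is precisely the same tacit identification the paper's own Step 3 makes in the line $\left\Vert x_{n+1}-T_{n}x_{n}\right\Vert \leq \left\Vert P_{C}t_{n}-P_{C}T_{n}x_{n}\right\Vert $, so your argument is faithful to, and no weaker than, the published proof.
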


\begin{proof}
Since the mapping $T$ is defined by $Tx=\lim_{n\rightarrow \infty }T_{n}x$
for all $x\in C,$ by Lemma \ref{Wang}, $T$ is a nonexpansive mapping, and $%
Fix\left( T\right) \neq \emptyset $. Moreover, since the operator $\mu
F-\rho V$ is $\left( \mu \eta -\rho \gamma \right) $-strongly monotone by
Lemma \ref{str}, we get the uniqueness of the solution of the variational
inequality (\ref{1}). Let denote this solution by $x^{\ast }\in Fix\left(
T\right) =\tciFourier $.

Now, we divide our proof into six steps.

\textbf{Step 1. }First we show that the sequences $\left\{ x_{n}\right\} $
is bounded. From hypothesis (C2), since $\lim_{n\rightarrow \infty }\frac{%
\beta _{n}}{\alpha _{n}}=0$, without loss of generality, we may assume that $%
\beta _{n}\leq \alpha _{n}$, for $n\geq 1$. Let $p\in \tciFourier $ and $%
t_{n}=\alpha _{n}\rho Vx_{n}+\left( I-\alpha _{n}\mu F\right) T_{n}y_{n}$.\
Then we have%
\begin{eqnarray}
\left\Vert y_{n}-p\right\Vert &=&\left\Vert P_{C}\left[ \beta
_{n}Sx_{n}+\left( 1-\beta _{n}\right) x_{n}\right] -P_{C}p\right\Vert  \notag
\\
&\leq &\left\Vert \beta _{n}Sx_{n}+\left( 1-\beta _{n}\right)
x_{n}-p\right\Vert  \notag \\
&\leq &\left( 1-\beta _{n}\right) \left\Vert x_{n}-p\right\Vert +\beta
_{n}\left\Vert Sx_{n}-p\right\Vert  \notag \\
&\leq &\left( 1-\beta _{n}\right) \left\Vert x_{n}-p\right\Vert +\beta
_{n}\left\Vert Sx_{n}-Sp\right\Vert +\beta _{n}\left\Vert Sp-p\right\Vert 
\notag \\
&\leq &\left\Vert x_{n}-p\right\Vert +\beta _{n}\left\Vert Sp-p\right\Vert ,
\label{2}
\end{eqnarray}%
and%
\begin{eqnarray}
\left\Vert x_{n+1}-p\right\Vert &=&\left\Vert P_{C}t_{n}-P_{C}p\right\Vert 
\notag \\
&\leq &\left\Vert t_{n}-p\right\Vert  \notag \\
&=&\left\Vert \alpha _{n}\rho Vx_{n}+\left( I-\alpha _{n}\mu F\right)
T_{n}y_{n}-p\right\Vert  \notag \\
&\leq &\alpha _{n}\left\Vert \rho Vx_{n}-\mu Fp\right\Vert +\left\Vert
\left( I-\alpha _{n}\mu F\right) T_{n}y_{n}-\left( I-\alpha _{n}\mu F\right)
T_{n}p\right\Vert  \notag \\
&\leq &\alpha _{n}\rho \gamma \left\Vert x_{n}-p\right\Vert +\alpha
_{n}\left\Vert \rho Vp-\mu Fp\right\Vert  \notag \\
&&+\left( 1-\alpha _{n}\nu \right) \left( \left\Vert y_{n}-p\right\Vert
+a_{n}\right) .  \label{3}
\end{eqnarray}%
From (\ref{2}) and (\ref{3}), we get%
\begin{eqnarray}
\left\Vert x_{n+1}-p\right\Vert &\leq &\alpha _{n}\rho \gamma \left\Vert
x_{n}-p\right\Vert +\alpha _{n}\left\Vert \rho Vp-\mu Fp\right\Vert  \notag
\\
&&+\left( 1-\alpha _{n}\nu \right) \left( \left\Vert x_{n}-p\right\Vert
+\beta _{n}\left\Vert Sp-p\right\Vert +a_{n}\right)  \notag \\
&\leq &\left( 1-\alpha _{n}\left( \nu -\rho \gamma \right) \right)
\left\Vert x_{n}-p\right\Vert +\alpha _{n}\left( \left\Vert \rho Vp-\mu
Fp\right\Vert +\left\Vert Sp-p\right\Vert +a_{n}\right)  \notag \\
&\leq &\left( 1-\alpha _{n}\left( \nu -\rho \gamma \right) \right)
\left\Vert x_{n}-p\right\Vert  \notag \\
&&+\alpha _{n}\left( \nu -\rho \gamma \right) \left[ \frac{1}{\left( \nu
-\rho \gamma \right) }\left( \left\Vert \rho Vp-\mu Fp\right\Vert
+\left\Vert Sp-p\right\Vert +\frac{a_{n}}{\alpha _{n}}\right) \right] .
\label{e3}
\end{eqnarray}%
Note that $\frac{a_{n}}{\alpha _{n}}\rightarrow 0$ as $n\rightarrow \infty $%
, so there exists a constant $M>0$ such that%
\begin{equation*}
\left\Vert \rho Vp-\mu Fp\right\Vert +\left\Vert Sp-p\right\Vert +\frac{a_{n}%
}{\alpha _{n}}\leq M\text{, }\forall n\geq 1\text{.}
\end{equation*}%
Thus, from (\ref{e3}) we have%
\begin{equation*}
\left\Vert x_{n+1}-p\right\Vert \leq \left( 1-\alpha _{n}\left( \nu -\rho
\gamma \right) \right) \left\Vert x_{n}-p\right\Vert +\alpha _{n}\left( \nu
-\rho \gamma \right) \frac{M}{\left( \nu -\rho \gamma \right) }.
\end{equation*}%
By induction, we get%
\begin{equation*}
\left\Vert x_{n+1}-p\right\Vert \leq \max \left\{ \left\Vert
x_{1}-p\right\Vert ,\frac{M}{\left( \nu -\rho \gamma \right) }\right\} .
\end{equation*}%
Hence, we obtain that $\left\{ x_{n}\right\} $ is bounded. So, the sequences 
$\left\{ y_{n}\right\} $,$\left\{ Tx_{n}\right\} $,$\left\{ Sx_{n}\right\} $,%
$\left\{ Vx_{n}\right\} $ and $\left\{ FTy_{n}\right\} $ are bounded.

\textbf{Step 2.} Now, we show that $\lim_{n\rightarrow \infty }\left\Vert
x_{n+1}-x_{n}\right\Vert =0$. By using the iteration (\ref{4}), we have%
\begin{eqnarray}
\left\Vert y_{n}-y_{n-1}\right\Vert &=&\left\Vert P_{C}\left[ \beta
_{n}Sx_{n}+\left( 1-\beta _{n}\right) x_{n}\right] -P_{C}\left[ \beta
_{n-1}Sx_{n-1}-\left( 1-\beta _{n-1}\right) x_{n-1}\right] \right\Vert 
\notag \\
&\leq &\beta _{n}\left\Vert Sx_{n}-Sx_{n-1}\right\Vert +\left( 1-\beta
_{n}\right) \left\Vert x_{n}-x_{n-1}\right\Vert  \notag \\
&&+\left\vert \beta _{n}-\beta _{n-1}\right\vert \left( \left\Vert
Sx_{n-1}\right\Vert +\left\Vert x_{n-1}\right\Vert \right)  \notag \\
&\leq &\left\Vert x_{n}-x_{n-1}\right\Vert +\left\vert \beta _{n}-\beta
_{n-1}\right\vert M_{1},  \label{e4}
\end{eqnarray}%
where $M_{1}$ is a constant such that $\sup_{n\geq 1}\left\{ \left\Vert
Sx_{n}\right\Vert +\left\Vert x_{n}\right\Vert \right\} \leq M_{1}.$ Also,
by using the inequality (\ref{e4}), we get%
\begin{eqnarray*}
\left\Vert x_{n+1}-x_{n}\right\Vert &\leq &\left\Vert
P_{C}t_{n}-P_{C}t_{n-1}\right\Vert \\
&\leq &\left\Vert \alpha _{n}\rho Vx_{n}+\left( I-\alpha _{n}\mu F\right)
T_{n}y_{n}\right. \\
&&\left. -\alpha _{n-1}\rho Vx_{n-1}+\left( I-\alpha _{n-1}\mu F\right)
T_{n-1}y_{n-1}\right\Vert \\
&\leq &\left\Vert \alpha _{n}\rho V\left( x_{n}-x_{n-1}\right) +\left(
\alpha _{n}-\alpha _{n-1}\right) \rho Vx_{n-1}+\left( I-\alpha _{n}\mu
F\right) T_{n}y_{n}\right. \\
&&-\left( I-\alpha _{n}\mu F\right) T_{n}y_{n-1}+T_{n}y_{n-1}-T_{n-1}y_{n-1}
\\
&&\left. +\alpha _{n-1}\mu FT_{n-1}y_{n-1}-\alpha _{n}\mu
FT_{n}y_{n-1}\right\Vert \\
&\leq &\alpha _{n}\rho \gamma \left\Vert x_{n}-x_{n-1}\right\Vert +\gamma
\left\vert \alpha _{n}-\alpha _{n-1}\right\vert \left\Vert
Vx_{n-1}\right\Vert \\
&&+\left( 1-\alpha _{n}\nu \right) \left\Vert
T_{n}y_{n}-T_{n}y_{n-1}\right\Vert +\left\Vert
T_{n}y_{n-1}-T_{n-1}y_{n-1}\right\Vert \\
&&+\mu \left\Vert \alpha _{n-1}FT_{n-1}y_{n-1}-\alpha
_{n}FT_{n}y_{n-1}\right\Vert \\
&\leq &\alpha _{n}\rho \gamma \left\Vert x_{n}-x_{n-1}\right\Vert +\gamma
\left\vert \alpha _{n}-\alpha _{n-1}\right\vert \left\Vert
Vx_{n-1}\right\Vert \\
&&+\left( 1-\alpha _{n}\nu \right) \left[ \left\Vert
y_{n}-y_{n-1}\right\Vert +a_{n}\right] +\left\Vert
T_{n}y_{n-1}-T_{n-1}y_{n-1}\right\Vert \\
&&+\mu \left\Vert \alpha _{n-1}\left( FT_{n-1}y_{n-1}-FT_{n}y_{n-1}\right)
-\left( \alpha _{n}-\alpha _{n-1}\right) FT_{n}y_{n-1}\right\Vert \\
&\leq &\alpha _{n}\rho \gamma \left\Vert x_{n}-x_{n-1}\right\Vert +\gamma
\left\Vert Vx_{n-1}\right\Vert +\left( 1-\alpha _{n}v\right) \left\Vert
x_{n}-x_{n-1}\right\Vert \\
&&+\left( 1-\alpha _{n}v\right) \left\vert \beta _{n}-\beta
_{n-1}\right\vert M_{1}+\left( 1-\alpha _{n}v\right) a_{n}+\mathfrak{D}%
_{B}\left( T_{n},T_{n-1}\right) \\
&&+\mu \alpha _{n-1}L\mathfrak{D}_{B}\left( T_{n},T_{n-1}\right) +\left\vert
\alpha _{n}-\alpha _{n-1}\right\vert \left\Vert FT_{n}y_{n-1}\right\Vert \\
&\leq &\left( 1-\alpha _{n}\left( v-\rho \gamma \right) \right) \left\Vert
x_{n}-x_{n-1}\right\Vert \\
&&+\left\vert \alpha _{n}-\alpha _{n-1}\right\vert \left( \gamma \left\Vert
Vx_{n-1}\right\Vert +\left\Vert FT_{n}y_{n-1}\right\Vert \right) \\
&&+\left( 1+\mu \alpha _{n-1}L\right) \mathfrak{D}_{B}\left(
T_{n},T_{n-1}\right) +\left\vert \beta _{n}-\beta _{n-1}\right\vert
M_{1}+a_{n} \\
&\leq &\left( 1-\alpha _{n}\left( v-\rho \gamma \right) \right) \left\Vert
x_{n}-x_{n-1}\right\Vert +\alpha _{n}\left( v-\rho \gamma \right) \delta
_{n},
\end{eqnarray*}%
where%
\begin{equation*}
\delta _{n}=\frac{1}{\left( \nu -\rho \gamma \right) }\left[ 
\begin{array}{c}
\left( 1+\mu \alpha _{n-1}L\right) \frac{\mathfrak{D}_{B}\left(
T_{n},T_{n-1}\right) }{\alpha _{n}} \\ 
+\left( \left\vert \frac{\alpha _{n}-\alpha _{n-1}}{\alpha _{n}}\right\vert
+\left\vert \frac{\beta _{n}-\beta _{n-1}}{\alpha _{n}}\right\vert \right)
M_{2}+\frac{a_{n}}{\alpha _{n}}%
\end{array}%
\right] ,
\end{equation*}%
and%
\begin{equation*}
\sup_{n\geq 1}\left\{ \gamma \left\Vert Vx_{n-1}\right\Vert +\left\Vert
FT_{n}y_{n-1}\right\Vert ,\text{ }M_{1}\right\} \leq M_{2}.
\end{equation*}%
Since $\limsup_{n\rightarrow \infty }\delta _{n}\leq 0$, it follows from
Lemma \ref{Y}, conditions (C2) and (C3) that%
\begin{equation}
\left\Vert x_{n+1}-x_{n}\right\Vert \rightarrow 0\text{ as }n\rightarrow
\infty .  \label{e5}
\end{equation}

\textbf{Step 3.} Next, we show that $\lim_{n\rightarrow \infty }\left\Vert
x_{n}-Tx_{n}\right\Vert =0$ as $n\rightarrow \infty .$ Note that%
\begin{eqnarray*}
\left\Vert x_{n}-T_{n}x_{n}\right\Vert &\leq &\left\Vert
x_{n}-x_{n+1}\right\Vert +\left\Vert x_{n+1}-T_{n}x_{n}\right\Vert \\
&\leq &\left\Vert x_{n}-x_{n+1}\right\Vert +\left\Vert
P_{C}t_{n}-P_{C}T_{n}x_{n}\right\Vert \\
&\leq &\left\Vert x_{n}-x_{n+1}\right\Vert +\left\Vert \alpha _{n}\rho
Vx_{n}+\left( I-\alpha _{n}\mu F\right) T_{n}y_{n}-T_{n}x_{n}\right\Vert \\
&\leq &\left\Vert x_{n}-x_{n+1}\right\Vert +\left\Vert \alpha _{n}\left(
\rho Vx_{n}-\mu FT_{n}y_{n}\right) +T_{n}y_{n}-T_{n}x_{n}\right\Vert \\
&\leq &\left\Vert x_{n}-x_{n+1}\right\Vert +\alpha _{n}\left\Vert \rho
Vx_{n}-\mu FT_{n}y_{n}\right\Vert +\left\Vert y_{n}-x_{n}\right\Vert +a_{n}
\\
&\leq &\left\Vert x_{n}-x_{n+1}\right\Vert +\alpha _{n}\left\Vert \rho
Vx_{n}-\mu FT_{n}y_{n}\right\Vert +\beta _{n}\left\Vert
Sx_{n}-x_{n}\right\Vert +a_{n}.
\end{eqnarray*}%
Since $a_{n}\rightarrow 0$, \ by using (\ref{e5}) and condition (C1),\ we
obtain 
\begin{equation*}
\lim_{n\rightarrow \infty }\left\Vert x_{n}-T_{n}x_{n}\right\Vert =0.
\end{equation*}%
Hence, we have 
\begin{eqnarray*}
\left\Vert x_{n}-Tx_{n}\right\Vert &\leq &\left\Vert
x_{n}-T_{n}x_{n}\right\Vert +\left\Vert T_{n}x_{n}-Tx_{n}\right\Vert \\
&\leq &\left\Vert x_{n}-T_{n}x_{n}\right\Vert +\mathfrak{D}_{B}\left(
T_{n},T\right) \rightarrow 0\text{ as }n\rightarrow \infty .
\end{eqnarray*}

\textbf{Step 4.} Next, we show that $\limsup_{n\rightarrow \infty
}\left\langle \left( \rho V-\mu F\right) x^{\ast },x_{n}-x^{\ast
}\right\rangle \leq 0$, where $x^{\ast }$\ is the unique solution of
variational inequality (\ref{1}). Since the sequence $\left\{ x_{n}\right\} $
is bounded, it has a weak convergent subsequence $\left\{ x_{n_{k}}\right\} $
such that%
\begin{equation*}
\limsup_{n\rightarrow \infty }\left\langle \left( \rho V-\mu F\right)
x^{\ast },x_{n}-x^{\ast }\right\rangle =\limsup_{k\rightarrow \infty
}\left\langle \left( \rho V-\mu F\right) x^{\ast },x_{n_{k}}-x^{\ast
}\right\rangle .
\end{equation*}%
Let $x_{n_{k}}\rightharpoonup \widetilde{x}$, as $k\rightarrow \infty $. It
follows from Lemma \ref{b} that $\widetilde{x}\in Fix\left( T\right)
=\tciFourier $. Hence%
\begin{equation*}
\limsup_{n\rightarrow \infty }\left\langle \left( \rho V-\mu F\right)
x^{\ast },x_{n}-x^{\ast }\right\rangle =\left\langle \left( \rho V-\mu
F\right) x^{\ast },\widetilde{x}-x^{\ast }\right\rangle \leq 0.
\end{equation*}

\textbf{Step 5.} Now, we show that the sequence $\left\{ x_{n}\right\} $
converges strongly to $x^{\ast }$ as $n\rightarrow \infty .$ By using the
iteration (\ref{4}), we have%
\begin{eqnarray}
\left\Vert x_{n+1}-x^{\ast }\right\Vert ^{2} &=&\left\langle
P_{C}t_{n}-x^{\ast },x_{n+1}-x^{\ast }\right\rangle  \notag \\
&=&\left\langle P_{C}t_{n}-t_{n},x_{n+1}-x^{\ast }\right\rangle
+\left\langle t_{n}-x^{\ast },x_{n+1}-x^{\ast }\right\rangle .  \label{e6}
\end{eqnarray}%
Since the metric projection $P_{C}$ satisfies the inequality%
\begin{equation*}
\left\langle x-P_{C}x,y-P_{C}x\right\rangle \leq 0,\text{ }\forall x\in
H,y\in C,
\end{equation*}%
and from (\ref{e6}), we get%
\begin{eqnarray*}
\left\Vert x_{n+1}-x^{\ast }\right\Vert ^{2} &\leq &\left\langle
t_{n}-x^{\ast },x_{n+1}-x^{\ast }\right\rangle \\
&=&\left\langle \alpha _{n}\rho Vx_{n}+\left( I-\alpha _{n}\mu F\right)
T_{n}y_{n}-x^{\ast },x_{n+1}-x^{\ast }\right\rangle \\
&=&\left\langle \alpha _{n}\left( \rho Vx_{n}-\mu Fx^{\ast }\right) +\left(
I-\alpha _{n}\mu F\right) T_{n}y_{n}\right. \\
&&\left. -\left( I-\alpha _{n}\mu F\right) T_{n}x^{\ast },x_{n+1}-x^{\ast
}\right\rangle \\
&=&\alpha _{n}\rho \left\langle Vx_{n}-Vx^{\ast },x_{n+1}-x^{\ast
}\right\rangle +\alpha _{n}\left\langle \rho Vx^{\ast }-\mu Fx^{\ast
},x_{n+1}-x^{\ast }\right\rangle \\
&&+\left\langle \left( I-\alpha _{n}\mu F\right) T_{n}y_{n}-\left( I-\alpha
_{n}\mu F\right) T_{n}x^{\ast },x_{n+1}-x^{\ast }\right\rangle .
\end{eqnarray*}%
Hence, from (\ref{2}) and Lemma \ref{cont}, we obtain 
\begin{eqnarray*}
\left\Vert x_{n+1}-x^{\ast }\right\Vert ^{2} &\leq &\alpha _{n}\rho \gamma
\left\Vert x_{n}-x^{\ast }\right\Vert \left\Vert x_{n+1}-x^{\ast
}\right\Vert +\alpha _{n}\left\langle \rho Vx^{\ast }-\mu Fx^{\ast
},x_{n+1}-x^{\ast }\right\rangle \\
&&+\left( 1-\alpha _{n}\nu \right) \left( \left\Vert y_{n}-x^{\ast
}\right\Vert +a_{n}\right) \left\Vert x_{n+1}-x^{\ast }\right\Vert \\
&\leq &\alpha _{n}\rho \gamma \left\Vert x_{n}-x^{\ast }\right\Vert
\left\Vert x_{n+1}-x^{\ast }\right\Vert +\alpha _{n}\left\langle \rho
Vx^{\ast }-\mu Fx^{\ast },x_{n+1}-x^{\ast }\right\rangle \\
&&+\left( 1-\alpha _{n}\nu \right) \left( \left\Vert x_{n}-x^{\ast
}\right\Vert +\beta _{n}\left\Vert Sx^{\ast }-x^{\ast }\right\Vert
+a_{n}\right) \left\Vert x_{n+1}-x^{\ast }\right\Vert \\
&=&\left( 1-\alpha _{n}\left( v-\rho \gamma \right) \right) \left\Vert
x_{n}-x^{\ast }\right\Vert \left\Vert x_{n+1}-x^{\ast }\right\Vert \\
&&+\alpha _{n}\left\langle \rho Vx^{\ast }-\mu Fx^{\ast },x_{n+1}-x^{\ast
}\right\rangle \\
&&+\left( 1-\alpha _{n}v\right) \beta _{n}\left\Vert Sx^{\ast }-x^{\ast
}\right\Vert \left\Vert x_{n+1}-x^{\ast }\right\Vert \\
&&+\left( 1-\alpha _{n}v\right) a_{n}\left\Vert x_{n+1}-x^{\ast }\right\Vert
\\
&\leq &\frac{\left( 1-\alpha _{n}\left( v-\rho \gamma \right) \right) }{2}%
\left( \left\Vert x_{n}-x^{\ast }\right\Vert ^{2}+\left\Vert x_{n+1}-x^{\ast
}\right\Vert ^{2}\right) \\
&&+\alpha _{n}\left\langle \rho Vx^{\ast }-\mu Fx^{\ast },x_{n+1}-x^{\ast
}\right\rangle +\beta _{n}\left\Vert Sx^{\ast }-x^{\ast }\right\Vert
\left\Vert x_{n+1}-x^{\ast }\right\Vert \\
&&+a_{n}\left\Vert x_{n+1}-x^{\ast }\right\Vert ,
\end{eqnarray*}%
which implies that%
\begin{eqnarray*}
\left\Vert x_{n+1}-x^{\ast }\right\Vert ^{2} &\leq &\frac{\left( 1-\alpha
_{n}\left( \nu -\rho \gamma \right) \right) }{\left( 1+\alpha _{n}\left( \nu
-\rho \gamma \right) \right) }\left\Vert x_{n}-x^{\ast }\right\Vert ^{2} \\
&&+\frac{2\alpha _{n}}{\left( 1+\alpha _{n}\left( \nu -\rho \gamma \right)
\right) }\left\langle \rho Vx^{\ast }-\mu Fx^{\ast },x_{n+1}-x^{\ast
}\right\rangle \\
&&+\frac{2\beta _{n}}{\left( 1+\alpha _{n}\left( \nu -\rho \gamma \right)
\right) }\left\Vert Sx^{\ast }-x^{\ast }\right\Vert \left\Vert
x_{n+1}-x^{\ast }\right\Vert \\
&&+\frac{2a_{n}}{\left( 1+\alpha _{n}\left( \nu -\rho \gamma \right) \right) 
}\left\Vert x_{n+1}-x^{\ast }\right\Vert \\
&\leq &\left( 1-\alpha _{n}\left( \nu -\rho \gamma \right) \right)
\left\Vert x_{n}-x^{\ast }\right\Vert ^{2}+\alpha _{n}\left( \nu -\rho
\gamma \right) \theta _{n},
\end{eqnarray*}%
where%
\begin{equation*}
\theta _{n}=\frac{2\alpha _{n}}{\left( 1+\alpha _{n}\left( \nu -\rho \gamma
\right) \right) \left( \nu -\rho \gamma \right) }\left[ 
\begin{array}{c}
\left\langle \rho Vx^{\ast }-\mu Fx^{\ast },x_{n+1}-x^{\ast }\right\rangle +%
\frac{\beta _{n}}{\alpha _{n}}M_{3} \\ 
+\frac{a_{n}}{\alpha _{n}}\left\Vert x_{n+1}-x^{\ast }\right\Vert%
\end{array}%
\right] ,
\end{equation*}%
and 
\begin{equation*}
\sup_{n\geq 1}\left\{ \left\Vert Sx^{\ast }-x^{\ast }\right\Vert \left\Vert
x_{n+1}-x^{\ast }\right\Vert \right\} \leq M_{3}.
\end{equation*}%
Since $\frac{\beta _{n}}{\alpha _{n}}\rightarrow 0$ and $\frac{a_{n}}{\alpha
_{n}}\rightarrow 0,$ we get%
\begin{equation*}
\limsup_{n\rightarrow \infty }\theta _{n}\leq 0.
\end{equation*}%
So, it follows from Lemma \ref{Y} that the sequence $\left\{ x_{n}\right\} $
generated by (\ref{4}) converges strongly to $x^{\ast }\in \tciFourier $
which is the unique solution of variational inequality (\ref{1}).

\textbf{Step 6.} Finally, since the point $x^{\ast }$ is the unique solution
of variational inequality (\ref{1}), in particular if we take $V=0$ and $F=I$
in the variational inequality (\ref{1}), then we get%
\begin{equation*}
\left\langle -\mu x^{\ast },x-x^{\ast }\right\rangle \leq 0,\text{ }\forall
x\in \tciFourier .
\end{equation*}%
So we have%
\begin{equation*}
\left\langle x^{\ast },x^{\ast }-x\right\rangle =\left\langle x^{\ast
},x^{\ast }\right\rangle -\left\langle x^{\ast },x\right\rangle \leq
0\Longrightarrow \left\Vert x^{\ast }\right\Vert ^{2}\leq \left\Vert x^{\ast
}\right\Vert \left\Vert x\right\Vert .
\end{equation*}%
Hence, $x^{\ast }$ is the unique solution to the quadratic minimization
problem $x^{\ast }=\func{argmin}_{x\in \tciFourier }\left\Vert x\right\Vert
^{2}$. This completes the proof.
\end{proof}

From Theorem \ref{X}, we can deduce the following interesting corollaries.

\begin{corollary}
\label{Y1} Let $C$ be a nonempty closed convex subset of a real Hilbert
space $H.$ Let $S:C\rightarrow H$ be a nonexpansive mapping and $\left\{
T_{n}\right\} $ be a sequence of nonexpansive mappings such that $%
\tciFourier \neq \emptyset $. Suppose that $Tx=\lim_{n\rightarrow \infty
}T_{n}x$ for all $x\in C$. Let $V:C\rightarrow H$ be a $\gamma $%
-Lipschitzian mapping, $F:C\rightarrow H$ be a $L$-Lipschitzian and $\eta $%
-strongly monotone operator such that these coefficients satisfy $0<\mu <%
\frac{2\eta }{L^{2}}$, $0\leq \rho \gamma <\nu $, where $\nu =1-\sqrt{1-\mu
\left( 2\eta -\mu L^{2}\right) }$. For an arbitrarily initial value $%
x_{1}\in C,$ consider the sequence $\left\{ x_{n}\right\} $ in $C$ generated
by (\ref{4}) where $\left\{ \alpha _{n}\right\} $ and $\left\{ \beta
_{n}\right\} $ are sequences in $\left[ 0,1\right] $ satisfying the
conditions (C1)-(C3) of Theorem \ref{X} except the condition $%
\lim_{n\rightarrow \infty }\frac{a_{n}}{\alpha _{n}}=0$. Then, the sequence $%
\left\{ x_{n}\right\} $ converges strongly to $x^{\ast }\in \tciFourier $,
where $x^{\ast }$\ is the unique solution of variational inequality (\ref{1}%
).
\end{corollary}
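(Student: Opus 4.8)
The plan is to obtain Corollary~\ref{Y1} as an immediate specialization of Theorem~\ref{X}. The decisive observation is that every nonexpansive mapping is a nearly nonexpansive mapping with respect to the null sequence: if each $T_{n}$ satisfies $\|T_{n}x-T_{n}y\|\le\|x-y\|$, then it satisfies (\ref{nearly}) with $a_{n}=0$. Hence the sequence $\{T_{n}\}$ of the corollary is a sequence of nearly nonexpansive mappings associated with the sequence $\{a_{n}\}=\{0\}$, for which $a_{n}\to0$ trivially. I would therefore invoke Theorem~\ref{X} with this choice of $\{a_{n}\}$; once every hypothesis is checked, the conclusion---strong convergence of $\{x_{n}\}$ to the unique solution $x^{\ast}$ of (\ref{1})---follows verbatim.

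First I would match the hypotheses. The assumptions on $S$, $V$, $F$ and the parameter constraints $0<\mu<2\eta/L^{2}$, $0\le\rho\gamma<\nu$ are identical, and conditions (C1) and (C3) are carried over unchanged. For (C2), the three limits involving $\alpha_{n}$ and $\beta_{n}$ are retained, while the remaining requirement $\lim_{n\to\infty}a_{n}/\alpha_{n}=0$ becomes $0/\alpha_{n}\to0$; this holds automatically and imposes no restriction, which is precisely why it may be deleted from the hypothesis list. I would also note that $T$, being the pointwise limit of nonexpansive maps, is itself nonexpansive, since $\|Tx-Ty\|=\lim_{n}\|T_{n}x-T_{n}y\|\le\|x-y\|$, so the nonexpansiveness of $T$ used throughout the proof is free.

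The one point demanding care---and the closest thing to an obstacle---is the identity $Fix(T)=\tciFourier$ invoked in Step~4 of the proof of Theorem~\ref{X}, where Lemma~\ref{b} places the weak subsequential limit $\widetilde{x}$ in $Fix(T)$ and the equality then identifies it as a point of $\tciFourier$. The inclusion $\tciFourier\subseteq Fix(T)$ is automatic: if $p\in\tciFourier$ then $T_{n}p=p$ for all $n$, and letting $n\to\infty$ gives $Tp=p$. The reverse inclusion is not automatic for a general pointwise-convergent sequence, so I would either retain $Fix(T)=\tciFourier$ as a standing hypothesis (exactly as in Theorem~\ref{X}) or verify it in the case at hand; granting it, the demiclosedness argument of Step~4 yields $\langle(\rho V-\mu F)x^{\ast},\widetilde{x}-x^{\ast}\rangle\le0$, and Steps~1--5 of Theorem~\ref{X} then apply without modification to give the claimed strong convergence.
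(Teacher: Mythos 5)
Your proposal is correct and is exactly the paper's (implicit) argument: the paper offers no separate proof of Corollary~\ref{Y1}, deducing it from Theorem~\ref{X} by viewing the nonexpansive mappings as nearly nonexpansive with $a_{n}\equiv 0$, so that the condition $\lim_{n\rightarrow\infty}a_{n}/\alpha_{n}=0$ holds automatically and can be dropped. Your caution about $Fix(T)=\tciFourier$ is well placed: the corollary as printed omits this hypothesis of Theorem~\ref{X}, and it is not automatic for pointwise limits of nonexpansive mappings (for instance $T_{n}x=\left(1-\frac{1}{n}\right)x$ on the unit ball has $\tciFourier=\{0\}$ while $T=I$, and one can arrange all of (C1)--(C3) yet have $x_{n}$ converge elsewhere), so retaining it as a standing assumption, as you do, is the right repair.
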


Let $\lambda _{i}>0$ ($i=1,2,3,\ldots N$) such that $\tsum_{i=1}^{N}\lambda
_{i}=1$ and $T_{1},T_{2},\ldots T_{N}$ be nonexpansive self mappings on $C$
such that $\tbigcap_{i=1}^{N}Fix\left( T_{i}\right) \neq \emptyset $. Then, $%
\tsum_{i=1}^{N}\lambda _{i}T_{i}$ is nonexpansive self mapping on $C$ (see 
\cite[Proposition 6.1]{WOSY}).

\begin{corollary}
\label{Z} Let $C$ be a nonempty closed convex subset of a real Hilbert space 
$H.$ Let $\lambda _{i}>0$ ($i=1,2,3,\ldots N$) such that $%
\tsum_{i=1}^{N}\lambda _{i}=1$ and $S,T_{1},T_{2},\ldots T_{N}$ be
nonexpansive self mappings on $C$ such that $\tbigcap_{i=1}^{N}Fix\left(
T_{i}\right) \neq \emptyset $. Let $V:C\rightarrow H$ be a $\gamma $%
-Lipschitzian mapping, $F:C\rightarrow H$ be a $L$-Lipschitzian and $\eta $%
-strongly monotone operator such that these coefficients satisfy $0<\mu <%
\frac{2\eta }{L^{2}}$, $0\leq \rho \gamma <\nu $, where $\nu =1-\sqrt{1-\mu
\left( 2\eta -\mu L^{2}\right) }$. For an arbitrarily initial value $%
x_{1}\in C,$ consider the sequence $\left\{ x_{n}\right\} $ in $C$ generated
by%
\begin{equation}
\left\{ 
\begin{array}{c}
y_{n}=\beta _{n}Sx_{n}+\left( 1-\beta _{n}\right) x_{n}\text{, \ \ \ \ \ \ \
\ \ \ \ \ \ \ \ \ \ \ \ \ \ \ \ \ \ \ \ \ \ \ \ \ \ \ } \\ 
x_{n+1}=P_{C}\left[ \alpha _{n}\rho Vx_{n}+\left( I-\alpha _{n}\mu F\right)
\tsum_{i=1}^{N}\lambda _{i}T_{i}y_{n}\right] \text{, }\forall n\geq 1%
\end{array}%
\right.  \label{5}
\end{equation}%
where $\left\{ \alpha _{n}\right\} $ and $\left\{ \beta _{n}\right\} $ are
sequences in $\left[ 0,1\right] $ satisfying the conditions (C1) and (C2) of
Theorem \ref{X} except the condition $\lim_{n\rightarrow \infty }\frac{a_{n}%
}{\alpha _{n}}=0$. Then, the sequence $\left\{ x_{n}\right\} $ in $C$
generated by (\ref{5}) converges strongly to $x^{\ast }\in
\tbigcap_{i=1}^{N}Fix\left( T_{i}\right) $, where $x^{\ast }$\ is the unique
solution of variational inequality%
\begin{equation*}
\left\langle \left( \rho V-\mu F\right) x^{\ast },x-x^{\ast }\right\rangle
\leq 0,\text{ }\forall x\in \tbigcap_{i=1}^{N}Fix\left( T_{i}\right) .
\end{equation*}
\end{corollary}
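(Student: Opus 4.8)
The plan is to obtain Corollary~\ref{Z} as a direct specialization of Corollary~\ref{Y1}, collapsing the $N$ mappings into a single one and feeding a constant sequence into the earlier machinery. Set $T:=\tsum_{i=1}^{N}\lambda_{i}T_{i}$ and take the constant sequence $T_{n}\equiv T$. By the cited \cite[Proposition 6.1]{WOSY}, $T$ is a nonexpansive self-mapping of $C$, so $\{T_{n}\}$ is trivially a sequence of nonexpansive mappings (with $a_{n}=0$) satisfying $T_{n}x\to Tx$ for every $x\in C$. Because the sequence is constant, $\mathfrak{D}_{B}(T_{n},T_{n+1})=0$ for all $n$, so condition (C3) holds automatically, and the omitted requirement $a_{n}/\alpha_{n}\to 0$ is vacuous. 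Moreover the iteration (\ref{5}) is exactly (\ref{4}) for this choice: since $S$ and each $T_{i}$ are self-mappings of $C$, the point $\beta_{n}Sx_{n}+(1-\beta_{n})x_{n}$ already lies in $C$, whence $P_{C}$ acts as the identity in the first line and may be dropped, while $T_{n}y_{n}=\tsum_{i=1}^{N}\lambda_{i}T_{i}y_{n}$ in the second.

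The one genuinely non-trivial step is the identification of the common fixed point set. With the constant sequence one has $\tciFourier=\tbigcap_{n=1}^{\infty}Fix(T_{n})=Fix(T)$, so it remains to establish
\begin{equation*}
Fix\Big(\tsum_{i=1}^{N}\lambda_{i}T_{i}\Big)=\tbigcap_{i=1}^{N}Fix(T_{i}).
\end{equation*}
The inclusion $\supseteq$ is immediate, as any common fixed point is fixed by the convex combination. For $\subseteq$ I would fix some $p\in\tbigcap_{i=1}^{N}Fix(T_{i})$ (nonempty by hypothesis) and, for $x$ with $Tx=x$, run the estimate
\begin{equation*}
\|x-p\|=\Big\|\tsum_{i=1}^{N}\lambda_{i}(T_{i}x-p)\Big\|\leq\tsum_{i=1}^{N}\lambda_{i}\|T_{i}x-T_{i}p\|\leq\tsum_{i=1}^{N}\lambda_{i}\|x-p\|=\|x-p\|.
\end{equation*}
Equality throughout forces $\|T_{i}x-p\|=\|x-p\|$ for each $i$ (since every $\lambda_{i}>0$) and, by strict convexity of the Hilbert norm, forces all the vectors $T_{i}x-p$ to coincide; hence $T_{i}x=T_{j}x$ for all $i,j$, and then $Tx=x$ yields $T_{i}x=x$, i.e. $x\in\tbigcap_{i=1}^{N}Fix(T_{i})$.

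With this identity in hand the conclusion follows at once: Corollary~\ref{Y1} applies to the constant sequence $T_{n}\equiv T$ under the stated parameter conditions (C1) and (C2), so the sequence $\{x_{n}\}$ generated by (\ref{5}) converges strongly to the unique point $x^{\ast}\in\tciFourier=\tbigcap_{i=1}^{N}Fix(T_{i})$ solving the variational inequality of the form (\ref{1}) taken over $\tbigcap_{i=1}^{N}Fix(T_{i})$. I expect the equality-in-the-triangle-inequality argument behind the fixed-point identity to be the main obstacle; all the remaining ingredients are bookkeeping, verifying that the specialized hypotheses of Corollary~\ref{Z} are precisely what is required to invoke Corollary~\ref{Y1}.
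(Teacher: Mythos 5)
Your proposal is correct, and its overall route is the same one the paper intends: collapse the problem to the constant sequence $T_{n}\equiv T:=\tsum_{i=1}^{N}\lambda _{i}T_{i}$ (with $a_{n}=0$, so (C3) and the dropped condition $a_{n}/\alpha _{n}\rightarrow 0$ are automatic) and invoke Corollary~\ref{Y1}/Theorem~\ref{X}. The one place you diverge is instructive: the paper disposes of the key identification $Fix\left( \tsum_{i=1}^{N}\lambda _{i}T_{i}\right) =\tbigcap_{i=1}^{N}Fix\left( T_{i}\right) $ by leaning on the cited result \cite[Proposition 6.1]{WOSY} (stated in the remark preceding the corollary), whereas you prove it from scratch via the equality case of the triangle inequality together with strict convexity of the Hilbert norm. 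Your argument is sound --- equality in $\tsum \lambda _{i}\left\Vert T_{i}x-p\right\Vert \leq \tsum \lambda _{i}\left\Vert x-p\right\Vert $ with $\lambda _{i}>0$ forces $\left\Vert T_{i}x-p\right\Vert =\left\Vert x-p\right\Vert $, and equality in the triangle inequality then forces the vectors $T_{i}x-p$ to coincide, hence $T_{i}x=Tx=x$ --- modulo the trivial case $x=p$, where strict convexity is not needed, which you should mention for completeness. What your version buys is self-containedness (no appeal to an external proposition, only to Hilbert space geometry); what the paper's version buys is brevity and a statement valid in the generality of strictly convex Banach spaces where the cited proposition lives. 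Both correctly reduce the corollary to bookkeeping once the fixed-point identity is in hand.
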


\begin{remark}
Our results can be reduced to some corresponding results in the following
ways:

\begin{enumerate}
\item In our iterative process (\ref{4}), if we take $S=I$ ($I$ is the
identity operator of $C$), then we derive the iterative process (\ref{f3})
which is studied by Sahu et. al. \cite{SKS}. Therefore, Theorem \ref{X}
generalizes the main result of Sahu et. al. \cite[Theorem 3.1]{SKS}.\ Also,
Corollary \ref{Y1} and Corollary \ref{Z} extends the Corollary 3.4 and
Theorem 4.1 of Sahu et. al. \cite{SKS}, respectively. So, our results
extends the corresponding results of Ceng et. al. \cite{Ceng} and of many
other authors.

\item If we take $S$ as a nonexpansive self mapping on $C$ and\ $T_{n}=T$
for all $n\geq 1$ such that $T$ is a nonexpansive mapping in (\ref{4}), then
we get the iterative process (\ref{f4}) of Wang and Xu. \cite{WX}. Hence,
Theorem \ref{X} generalizes the main result of Wang and Xu \cite[Theorem 3.1]%
{WX}. So, our results extend and improve the corresponding results of \cite%
{T1, MX1}.

\item The problem of finding the solution of variational inequality (\ref{1}%
), is equivalent to finding the solutions of hierarchical fixed point
problem 
\begin{equation*}
\left\langle \left( I-S\right) x^{\ast },x^{\ast }-x\right\rangle \leq
0,\forall x\in \tciFourier ,
\end{equation*}%
where $S=$ $I-\left( \rho V-\mu F\right) .$
\end{enumerate}
\end{remark}

\end{document}